\newcommand{\Om}{\Omega}
\newcommand{\RR}{\mathbb{R}}
\newcommand{\Rn}{{\mathbb{R}^N}}
\newcommand{\Div}{\hbox{\rm div}}
\newcommand{\eps}{\varepsilon}
\newcommand{\LN}{{\cal L}_N}
\newcommand{\gr}{\nabla}
\newcommand{\pto}[2]{\langle #1 , #2\rangle}
\newcommand{\weakstar}{{\rightharpoonup}^{\hspace{-7pt}*}\ }
\begin{document}

\title{Existence and Approximability Results for variational problems under uniform constraints on the gradient
by power penalty.
\thanks{This work was partially supported by the Millennium Science Initiative of MIDEPLAN-Chile, and by CONICYT-Chile
under grants FONDAP in Applied Mathematics and FONDECYT 1050706.}
}

\author{Felipe Alvarez\thanks{Centro de Modelamiento Matematico (CNRS UMI 2807), Departamento de Ingenier\'ia Matematica, 
Universidad de Chile, Av. Blanco Encalada 2120, Santiago, Chile.}        \and
        Salvador Flores\thanks{Centro de Modelamiento Matem\'atico (CNRS UMI 2807) -- Universidad de Chile. 
	Supported by Fondecyt under grant 3120166}
}

\maketitle

\begin{abstract}
 Variational problems under uniform quasiconvex constraints on the gradient are studied. 
In particular, existence of solutions to such problems is proved as well as existence of lagrange multipliers associated 
to the uniform constraint. They are shown to satisfy an Euler-Lagrange equation and a complementarity property.
Our technique consists in approximating the original problem by a one-parameter family of smooth unconstrained optimization problems.
Numerical experiments confirm the ability of our method to accurately compute solutions and Lagrange multipliers.
\end{abstract}
\begin{AMS}
{49M30,49J45.} 
\end{AMS}

\section{Introduction}\label{intro}                                  %

We study the following class of problems from the calculus of variations
\begin{equation}\label{P:main}
\inf\{J(v): |T(x,\nabla v(x))|\leq 1\ a.e\ x\ in\ \Om, ~v=g\ on\ \partial \Om\}.
\end{equation}
In particular, we prove existence and approximability of solutions and Lagrange multipliers associated to the uniform constraint on
the gradient. We approximate the problem by a sequence of unconstrained problems penalizing the uniform constraint by a power term.

The model case of  \eqref{P:main} is the problem of the elastoplastic torsion of a cilindrical bar of section $\Omega$:
\begin{equation}\label{P:torsion}
 \min_{v \in K_0}   \frac{1}{2} \int_\Omega (|\nabla v(x)|^2 - h(x)v(x) )dx 
\end{equation}
\noindent for $K_0=\{v\in H_0^1(\Omega) \mid |\nabla v(x) |\leq 1\ a.e\ x\in \Om \}$. 
Problem \eqref{P:torsion} 
has been extensively studied by \citet{Ting,brezis,cafarelli} and in the numerical aspects by \citet{GLT}. 
\citet{brezis} proves the existence and uniqueness of a multiplier $\lambda\in L^\infty$ satisfying
the system
\begin{subequations}\label{E:bresystem}
\begin{eqnarray}
 \lambda \geq 0\qquad a.e\ on\ \Omega\\
\lambda (1-|\nabla u|)=0 \qquad a.e\ on\ \Omega \label{bresystem:complementarity}\\
-\Delta u-\sum_{i=1}^N \frac{\partial }{\partial x_i} (\lambda \frac{\partial u}{\partial x_i})=h \quad in\ \mathcal{D}' \label{bresystem:euler-lagrange}
\end{eqnarray}
\end{subequations}

when the right hand side $h$ is constant. \citet{percivale} reconsider the problem for a general elliptic operator $\mathcal{A}$
and nonconstant right hand side $h$, obtaining a measure multiplier satisfying a system  analogous to 
 \eqref{bresystem:complementarity}-\eqref{bresystem:euler-lagrange}.
 \citet{brezis} uses the characteristics method to solve \eqref{bresystem:euler-lagrange} for $\lambda$, obtaining a semi-explicit formula 
for the multiplier. \citet{percivale} approximate the problem by a sequence of nonsmooth problems penalizing the violation of the 
constraint $|\nabla u|\leq 1\ a.e$.
Whether similar results could be obtained in the framework of a general duality theory standed as an open question 
for a long time. \citet{Ekeland} show the insufficiency of the traditional duality theory for tackling this problem.
The question was solved positively by \citet{daniele} using a new infinite dimensional duality theory \citep[see also][]{Donato2011,Maugeri2014}.
\citet{daniele} show, for a large class of problems including Problems \eqref{e.prob} and \eqref{P:torsion}, that if the problem 
is solvable and the solution satisfies a constraint qualification condition, then there exists a Lagrange 
multiplier  $\lambda\in L^\infty_+$ satisfying \eqref{bresystem:complementarity}, which is indeed the solution of a dual problem. 
Concerning existence of solutions for the general  Problem \eqref{P:main}, we can cite the results of 
\citet{Ball}, showing existence for variational problems under constraints of the type $T(\nabla v(x))\in C(x)$ for almost every $x \in \Om$ .
From this perspective, the existence of solutions as well as of Lagrange multipliers is well established. 
Nonetheless, at least two issues remain unsolved. The first is to have a practical way to approximate Problem \eqref{P:main} by 
simpler problems that ca be solved using existing mature numerical methods. The second issue is closely related to the first, and 
has to do with choosing a particular solution in problems with lack of uniqueness. In this paper we address those 
open issues by providing an approximation scheme for Problem \eqref{P:main}. The original problem is approximated by 
a sequence of unconstrained problems whose solution converges to a solution of the constrained problem. 
Moreover, by analyzing the optimality conditions we identify a term that is then showed to converge to a Lagrange multiplier 
associated to the uniform constraint on the gradient. In this way, we recover and in some cases improve the existence results
and provide a practical approximation scheme. The effectiveness of our approach is illustrated through numerical 
simulations.    

\section{Statement of the problem and main results}

Let $\Omega$ be a bounded domain in $\Rn$ with $N\geq 1$ and $T:\Omega\times\RR^{m\times N}\to[0,\infty[$ a Carath\'eodory function. 
 Let $s\geq 1$ and consider a functional $J:W^{1,s}(\Omega;\RR^m)\to \RR\cup\{+\infty\}$, which is supposed to be
bounded from below and sequentially lower semicontinuous in the weak topology of $W^{1,s}(\Omega;\RR^m)$.
We are interested in the minimization problem
\begin{equation}\label{e.prob}
\inf\{J(v)\mid \|T(\cdot,\nabla v) \|_{\infty,\Omega}\leq 1, ~v \in g+ W_0^{1,s}(\Omega;\RR^m)\},
\end{equation}
where 
$$ \|T(\cdot,\nabla v)\|_{\infty,\Omega}=\hbox{ess-}\sup\{T(x,\nabla v(x)) \mid x\in\Omega\},$$
and $g\in W^{1,\infty}(\Omega;\RR^m)\cap C(\overline{\Omega};\RR^m)$
is a given function satisfying
\begin{equation}\label{E:g}
J(g)<+\infty \hbox{ and }T(x,\nabla g(x))\leq 1\ \hbox{ for a.e.}\ x\in\overline\Omega.
\end{equation}
Define $J_\infty: W^{1,s}(\Omega;\RR^m)\to \RR\cup\{+\infty\}$ by
$$
J_\infty(v)=\left\{
\begin{array}{cl}
J(v) & \hbox{ if } \|T(\cdot,\nabla v)\|_{\infty,\Omega}\leq 1,  \\
+\infty &\hbox{ otherwise}. \end{array} \right.
$$
Then \eqref{e.prob} may be rewritten as
\begin{equation}\label{e.prob2}
\inf\left\{J_\infty(v)\mid v \in g+ W_0^{1,s}(\Omega;\RR^m)\right\}.
\end{equation}
By \eqref{E:g}, we have that  $J_\infty(g)<+\infty$.\\

From now on, we  assume that $T$ is {\em quasiconvex} in the sense
of Morrey, {\em i.e.} for almost for every $x_0\in\Omega$ and any $\xi_0\in \RR^{m\times N}$
\begin{equation}\label{e.quasiconvex}
 T(x_0,\xi_0)\leq \frac{1}{\LN(D)}\int_D T(x_0,\xi_0+\nabla\phi(x))dx,
\end{equation}
where $D$ is an arbitrary bounded domain in $\Rn$ and $\phi$ is any function in $W^{1,\infty}_0(D;\RR^m)$. Here,
$\LN$ stands for the Lebesgue measure in  $\Rn$. 
Suppose also that
\begin{equation}\label{e.growth}
\alpha_1 (1+|\xi|^r) \leq  T(x,\xi) \leq \beta_1(1+|\xi|^{r}) 
\end{equation}
where $0<\alpha_1\leq \beta_1$  and $1\leq r< \infty$. 
Concerning the functional $J$, in most interesting applications it will take the integral form
\begin{equation}\label{E:integral-form}
J(u)=\int_\Om f(x,u(x),\nabla u(x)) dx 
\end{equation}
where $f:\RR^N \times \RR^m \times \RR^{m\times N}$ is a Carathéodory integrand satisfying, for almost every $x\in \Om$,
for every $(u,\xi)\in\RR^N\times \RR^{m\times N}$,
 
\begin{subequations}
\begin{eqnarray}
\xi \mapsto f(x,u,\xi)\ \text{is quasiconvex}\\
\gamma_1(x)\leq f(x,u,\xi) \leq \beta_2 (|\xi|^s+ |u|^t )+\gamma_2(x)
\end{eqnarray}
\end{subequations}
where $\beta_2\geq 0$, $\gamma_1,\gamma_2\in L^1(\Om)$ and $1\leq t< \infty$.\\

For each $p\in\ ]\max(r,s),\infty[$ define the $p$-power penalty functional $J_p:W^{1,p}(\Omega;\RR^m)\to\RR\cup\{+\infty\}$ by
$$
J_p(v)= J(v)+\frac{r}{p} \int_\Omega Tv(x)^{p/r} dx, 
$$
where
$$
Tv(x)=T(x,\nabla v(x))
$$
 and consider the penalized problems
\begin{equation}\label{e.prob.p}
\inf\{J_p(v)\mid v \in g+ W_0^{1,p}(\Omega;\RR^m)\}.
\end{equation}

Under the above conditions, the existence of solutions $u_p$ to \eqref{e.prob.p} follows from a
standard application of the direct method of the calculus of variations \citep[cf.][Theorem 8.29]{dacorogna}. 
In this direction, notice that the quasiconvexity of $T$ yields the quasiconvexity of $T^p$ for every $1<p<\infty$.

Any selection of solutions to Problems \eqref{e.prob.p} uniformly converges to a solution of Problem \eqref{P:main}.
We do not assume a priori existence of solutions to Problem \eqref{P:main}, therefore the following is an 
existence and approximability result.

\begin{theorem}\label{T:primal-intro}
Under the previous assumptions, we have that:
\begin{itemize}
\item[{\rm (i)}] For every $q\geq \max\{N+1,r,s\}$, the net $\{u_p\mid p\geq q,\, p\to\infty\}$ is bounded in $W^{1,q}(\Omega;\RR^m)$ and
relatively compact in $C^\alpha(\overline{\Omega};\RR^m)$, where $\alpha=1-N/q$.
\item[{\rm (ii)}] If $u_\infty$ is a cluster point of $\{u_p\mid p\to\infty\}$ in $C(\overline{\Omega};\RR^m)$, then $u_\infty$ is an optimal 
solution to \eqref{e.prob} and, moreover,
\[
\lim\limits_{p\to\infty}\min
J_p=\lim\limits_{p\to\infty}J_p(u_p)=\lim\limits_{p\to\infty}J(u_p)=J(u_\infty)=\min
J_\infty.\]
\end{itemize}
\end{theorem}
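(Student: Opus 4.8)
The plan is to obtain uniform a priori bounds from the single, readily available inequality $J_p(u_p)\le J_p(g)$, then upgrade weak/compactness convergence to uniform convergence, and finally pass to the limit in the objective values using lower semicontinuity on one side and the optimality of $u_p$ on the other.

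\emph{Step 1: uniform bounds.} Since $g\in g+W_0^{1,p}(\Omega;\RR^m)$ is admissible for \eqref{e.prob.p}, optimality of $u_p$ gives $J_p(u_p)\le J_p(g)$. Using \eqref{E:g}, namely $T(x,\nabla g(x))\le1$ a.e., we have $\int_\Omega Tg(x)^{p/r}dx\le\LN(\Omega)$, so $J_p(g)\le J(g)+\tfrac{r}{p}\LN(\Omega)$, which is bounded above independently of $p$. Combined with the lower bound on $J$ (from $\gamma_1\in L^1$ in the integral case, or boundedness from below in general), this forces $\tfrac{r}{p}\int_\Omega Tu_p(x)^{p/r}dx$ to be bounded uniformly in $p$. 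Now fix $q\ge\max\{N+1,r,s\}$ and take $p>q$. By Jensen's (or Hölder's) inequality applied to the probability measure $\LN(\Omega)^{-1}dx$,
\[
\left(\frac1{\LN(\Omega)}\int_\Omega Tu_p^{q/r}dx\right)^{p/q}\le \frac1{\LN(\Omega)}\int_\Omega Tu_p^{p/r}dx\le \frac{p}{r}\bigl(J_p(g)-\inf J\bigr)=:C_p,
\]
with $C_p=O(p)$. Hence $\|Tu_p\|_{L^{q/r}(\Omega)}^{q/r}\le \LN(\Omega)\,C_p^{q/p}$, and since $C_p^{q/p}\to1$, the left side is bounded uniformly in $p\ge q$. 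The coercivity estimate \eqref{e.growth}, $T(x,\xi)\ge\alpha_1(1+|\xi|^r)$, then yields $\int_\Omega|\nabla u_p|^q\le\alpha_1^{-q/r}\int_\Omega Tu_p^{q/r}$ bounded uniformly; together with the fixed boundary data $g\in W^{1,\infty}$ and Poincaré's inequality, $\{u_p\}_{p\ge q}$ is bounded in $W^{1,q}(\Omega;\RR^m)$. Since $q>N$, Morrey's embedding $W^{1,q}\hookrightarrow\!\hookrightarrow C^{0,1-N/q}(\overline\Omega;\RR^m)$ gives relative compactness in $C^\alpha(\overline\Omega;\RR^m)$ with $\alpha=1-N/q$, proving (i).

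\emph{Step 2: admissibility of the cluster point.} Let $u_\infty$ be a $C(\overline\Omega)$-cluster point along some sequence $p_k\to\infty$; by (i) we may assume $u_{p_k}\weak u_\infty$ in $W^{1,q}$ for every fixed $q$ and $u_{p_k}\to u_\infty$ uniformly. The boundary condition $v=g$ on $\partial\Omega$ is stable under uniform convergence, so $u_\infty\in g+W_0^{1,s}(\Omega;\RR^m)$. To see $\|T(\cdot,\nabla u_\infty)\|_{\infty,\Omega}\le1$: fix any $q$; the map $v\mapsto \int_\Omega T(x,\nabla v)^{q/r}dx$ is sequentially weakly lower semicontinuous on $W^{1,q}$ because $T^{q/r}$ is quasiconvex (quasiconvexity of $T$ is preserved under the increasing convex power, as noted in the excerpt) with the growth \eqref{e.growth}; hence
\[
\|T(\cdot,\nabla u_\infty)\|_{L^{q/r}}^{q/r}\le\liminf_{k}\|T(\cdot,\nabla u_{p_k})\|_{L^{q/r}}^{q/r}\le \LN(\Omega)\limsup_k C_{p_k}^{q/p_k}=\LN(\Omega).
\]
Thus $\|T(\cdot,\nabla u_\infty)\|_{L^{q/r}(\Omega)}\le\LN(\Omega)^{r/q}$ for every $q\ge\max\{N+1,r,s\}$; letting $q\to\infty$ gives $\|T(\cdot,\nabla u_\infty)\|_{\infty,\Omega}\le1$. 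Therefore $u_\infty$ is feasible for \eqref{e.prob} and $J_\infty(u_\infty)=J(u_\infty)$.

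\emph{Step 3: optimality and convergence of values.} On one hand, weak lower semicontinuity of $J$ on $W^{1,s}$ gives $J(u_\infty)\le\liminf_k J(u_{p_k})\le\liminf_k J_{p_k}(u_{p_k})$, since the penalty term is nonnegative. On the other hand, for any feasible $v$ for \eqref{e.prob} we have $\|T(\cdot,\nabla v)\|_{\infty,\Omega}\le1$, hence $Tv(x)^{p/r}\le1$ a.e., so $J_p(v)\le J(v)+\tfrac{r}{p}\LN(\Omega)$; taking $v=u_\infty$ (feasible by Step 2) and using optimality of $u_p$,
\[
J_p(u_p)\le J_p(u_\infty)\le J(u_\infty)+\frac{r}{p}\LN(\Omega),
\]
so $\limsup_k J_{p_k}(u_{p_k})\le J(u_\infty)$. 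Chaining these, all the inequalities are equalities: $\lim_k J_{p_k}(u_{p_k})=\lim_k J(u_{p_k})=J(u_\infty)$, and by the same sandwich $J(u_\infty)=\inf\{J_\infty(v)\}=\min J_\infty$ (the infimum being attained at $u_\infty$). Since the argument shows $\limsup_k J_{p_k}(u_{p_k})\le\inf J_\infty\le\liminf_k J_{p_k}(u_{p_k})$ and these hold along \emph{every} subsequence realizing a cluster point — indeed for the whole net, using that any subnet of $\{u_p\}$ has a further uniformly convergent subnet — the limit $\lim_{p\to\infty}\min J_p$ exists and equals $\min J_\infty$, giving (ii).

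\emph{Main obstacle.} The delicate point is Step 2: controlling the essential supremum of $T(\cdot,\nabla u_\infty)$ from the $L^{q/r}$ bounds, which requires (a) the weak lower semicontinuity of the quasiconvex functional $v\mapsto\int T(x,\nabla v)^{q/r}$ with the correct growth matching $q$, and (b) checking that the constants $C_{p_k}^{q/p_k}\to1$ uniformly enough to survive the double limit (first $k\to\infty$ at fixed $q$, then $q\to\infty$). Everything else is a routine application of the direct method and the sandwich inequalities $J(u_\infty)\le\liminf J_p(u_p)\le\limsup J_p(u_p)\le J(u_\infty)$.
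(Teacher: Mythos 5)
Your proposal is correct and follows essentially the same route as the paper's proof: the a priori bound $\tfrac{1}{p}\|Tu_p\|_{p/r}^{p/r}=O(1)$ from $J_p(u_p)\le J_p(g)$, interpolation down to fixed $L^{q/r}$ bounds (Jensen in your write-up, Hölder in the paper — same estimate), Poincaré/Morrey with $p$-uniform constants for (i), weak lower semicontinuity of the quasiconvex penalty plus the $q\to\infty$ limit for feasibility of $u_\infty$, and the same sandwich $J(u_\infty)\le\liminf J_{p}(u_{p})\le\limsup J_{p}(u_{p})\le\inf J_\infty$ followed by a subsequence/compactness argument for the full net. No substantive differences worth noting.
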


Next we address the existence and approximability of Lagrange multipliers for the uniform constraint on the gradient.
The underlying rationale bears some resemblances to some methods for showing existence of Lagrange multipliers 
without recourse to separation theorems, such as the Fritz John optimality conditions in nonlinear programming.
Let us consider the Lagrange functional $L:H^1\times L^{\infty}_+\to \overline{\RR}$
\begin{equation}\label{E:lagrangeano}
L(u,\lambda)=J(u)+\int \lambda (x)(Tu(x)-1) dx
\end{equation}

If a solution $u$ to Problem \eqref{e.prob} satisfies a constraint qualification condition, 
then there exists $\lambda\in L^\infty_+$ such that $(u,\lambda)$ is a saddle point of $L$ \citep{daniele}.
Let $(u,\lambda)$ be a saddle point of $L$, and suppose that $T$ is differentiable with respect to its second argument. 
The minimality condition for $u$ reads 
$$
J'(u)[v]+\int \lambda DT(x,\nabla u(x))\nabla v(x)=0\quad \forall v\in C_0^\infty  
$$
On the other hand, the optimality conditions  for the penalized problem \eqref{e.prob.p} yields
$$
J'(u_p)[v]+\int  (Tu_p)^{p-1} DT(x,\nabla u_p(x))\nabla v(x)=0\quad \forall v\in C_0^\infty.
$$
Suppose that $J'(u_p) \to J'(u)$ as $p\to\infty$, then

\begin{equation}\label{E:esquema}
\int  (Tu_p)^{p-1} DT(x,\nabla u_p(x))\nabla v(x)\to \int \lambda DT(x,\nabla u(x))\nabla v(x)\ \forall v\in C_0^\infty 
\end{equation}
Equation \eqref{E:esquema} strongly suggests that the sequence $\{(Tu_p)^{p-1}\}_{p\geq p_1}$ must play the role of
a Lagrange multiplier as $p$ goes to infinity. The main difficulty of this part is to prove the convergence of that
sequence in $L^\infty(\Om)$, which is required in order to obtain results supporting the numerical approximation of the multipliers.
We use differential equations methods in this part, therefore the class of considered problems is more 
restrictive than in Theorem \eqref{T:primal-intro}. For those problems we prove the following
\begin{theorem}\label{T:dual-intro}
 Let $u$ be a cluster point of $\{u_p\}_{p\geq p_1}$ in $C(\overline{\Om})$.
 There exists $\lambda\in L^\infty(\Om)$ such that 
\begin{enumerate}
\item The sequence $\{|\nabla u_p|^{p-2}\nabla u_p\}_{p\geq p_1}$ weakly$-*$ converges to $\lambda\nabla u$, up to subsequence. 
\item The primal-dual pair $(u,\lambda)$ satisfy the system
\begin{eqnarray}\label{E:system-intro}
 \Div( W'(|\nabla u|^2)\nabla u)+ \Div( \lambda \nabla u)=-\phi'(u)\ in\ \mathcal{D}'. \label{E:system-intro1} \\
 \lambda(x) \geq 0\ \text{a.e in}\ \Om. \label{E:system-intro2}\\
 \lambda(x)(|\nabla u(x)|-1)=0\ \text{a.e in}\ \Om \label{E:system-intro3}  
\end{eqnarray}
\end{enumerate}
\end{theorem}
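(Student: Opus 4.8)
The plan is to obtain the two assertions by passing to the limit $p\to\infty$ in the Euler--Lagrange equations of the penalized problems \eqref{e.prob.p}. Writing $\sigma_p:=|\nabla u_p|^{p-2}\nabla u_p=\lambda_p\nabla u_p$ with $\lambda_p:=|\nabla u_p|^{p-2}\ge0$, the optimality condition for the minimizer $u_p$ of \eqref{e.prob.p} (cf.\ the discussion preceding the theorem) reads
\[
\int_\Om W'(|\nabla u_p|^2)\nabla u_p\cdot\nabla v\,dx+\int_\Om\sigma_p\cdot\nabla v\,dx=\int_\Om\phi'(u_p)\,v\,dx\qquad\forall\,v\in W^{1,p}_0(\Om),
\]
that is, $\Div(W'(|\nabla u_p|^2)\nabla u_p)+\Div\sigma_p=-\phi'(u_p)$ in $\mathcal D'(\Om)$, the penalized analogue of \eqref{E:system-intro1}. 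Comparing $J_p(u_p)\le J_p(g)$, using $|\nabla g|\le1$ a.e.\ and the lower bound on $J$, I would first record the energy estimate $\int_\Om|\nabla u_p|^p\,dx\le Cp$. Combined with Theorem~\ref{T:primal-intro}, this lets me fix a subsequence along which $u_p\to u$ in $C(\overline\Om)$, $\nabla u_p\weak\nabla u$ in every $L^q(\Om)$, and $|\nabla u|\le1$ a.e.\ in $\Om$.

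The step I expect to be the main obstacle is a uniform-in-$p$ estimate
\[
\|\sigma_p\|_{L^\infty(\Om)}=\|\nabla u_p\|_{L^\infty(\Om)}^{p-1}\le C,\qquad\text{equivalently}\qquad\|\nabla u_p\|_{L^\infty(\Om)}\le 1+\tfrac Cp,
\]
which is precisely the $L^\infty$-convergence difficulty pointed out before the statement and the reason for restricting the class of problems here. The plan is to establish it by differential-equations methods: an interior gradient estimate for the quasilinear equation above --- obtained by differentiating it and running a Caccioppoli/Moser iteration, applied to $|\nabla u_p|$ or to the nonlinear quantity $|\nabla u_p|^{(p-2)/2}\nabla u_p$, while tracking carefully the dependence of the constants on $p$ and feeding in the bound $\int_\Om|\nabla u_p|^p\le Cp$ --- together with a boundary barrier argument exploiting the Lipschitz constant of $g$ and the regularity of $\partial\Om$; alternatively, by exhibiting $(1+C/p)$-Lipschitz super- and sub-solutions and applying a comparison principle for the operator. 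Once this holds, $\|\lambda_p\|_{L^\infty(\Om)}\le C$ follows at once, and the remaining steps are comparatively routine.

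Next I would upgrade the weak convergence of the gradients to convergence a.e. Since $u\in C(\overline\Om)\cap W^{1,\infty}(\Om)$ and $u=g$ on $\partial\Om$, we have $u_p-u\in W^{1,p}_0(\Om)$, so it is admissible in the optimality condition; the right-hand side $\int_\Om\phi'(u_p)(u_p-u)\,dx$ tends to $0$ by uniform convergence of $u_p$ and boundedness of $\phi'(u_p)$ in $L^1(\Om)$, while $|\nabla u|\le1$ a.e.\ yields
\[
\int_\Om\sigma_p\cdot\nabla(u_p-u)\,dx \geq \int_\Om|\nabla u_p|^{p-1}(|\nabla u_p|-1)\,dx \geq -\frac{|\Om|}{p},
\]
using $\max_{0\le t\le1}t^{p-1}(1-t)\le1/p$. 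Hence $\limsup_{p\to\infty}\int_\Om W'(|\nabla u_p|^2)\nabla u_p\cdot\nabla(u_p-u)\,dx\le0$; subtracting $\int_\Om W'(|\nabla u|^2)\nabla u\cdot\nabla(u_p-u)\,dx\to0$ (here $W'(|\nabla u|^2)\nabla u$ is bounded and $\nabla u_p\weak\nabla u$) and using strict monotonicity of $\xi\mapsto W'(|\xi|^2)\xi$ (uniform convexity of $W(|\cdot|^2)$, part of the restricted setting), a classical argument gives $\nabla u_p\to\nabla u$ a.e.\ in $\Om$, hence --- with the $L^\infty$-bound --- strongly in every $L^q(\Om)$. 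Extracting $\lambda_p\weakstar\lambda$ in $L^\infty(\Om)$ then gives $\lambda\ge0$ a.e.\ (this is \eqref{E:system-intro2}); moreover, by the weak-$*$/strong product rule, $\sigma_p=\lambda_p\nabla u_p\weak\lambda\nabla u$, which together with the $L^\infty$-bound is the weak-$*$ convergence $|\nabla u_p|^{p-2}\nabla u_p\weakstar\lambda\nabla u$ of assertion~1. Passing to the limit in the optimality condition with $v\in C_0^\infty(\Om)$ --- dominated convergence for the $W'$-term (a.e.\ convergence of $\nabla u_p$ and the uniform gradient bound), weak-$*$ convergence for the $\sigma_p$-term, uniform convergence for the $\phi'$-term --- yields \eqref{E:system-intro1}.

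Finally, for the complementarity relation \eqref{E:system-intro3}, fix $\delta>0$ and set $E_\delta:=\{x\in\Om:|\nabla u(x)|\le1-\delta\}$. By the a.e.\ convergence $|\nabla u_p|\to|\nabla u|$, for a.e.\ $x\in E_\delta$ one has $|\nabla u_p(x)|\le1-\delta/2$ for $p$ large, so $\lambda_p(x)=|\nabla u_p(x)|^{p-2}\to0$ there; since $0\le\lambda_p\le C$ on $E_\delta$, dominated convergence gives $\int_{E_\delta}\lambda_p\,dx\to0$, while $\int_{E_\delta}\lambda_p\,dx\to\int_{E_\delta}\lambda\,dx$ by the weak-$*$ convergence. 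Hence $\lambda=0$ a.e.\ on $E_\delta$, and letting $\delta\downarrow0$ gives $\lambda=0$ a.e.\ on $\{|\nabla u|<1\}$, i.e.\ $\lambda(x)(|\nabla u(x)|-1)=0$ a.e.\ in $\Om$. To summarize: the only genuinely hard point of the argument is the uniform estimate $\|\nabla u_p\|_{L^\infty(\Om)}^{p-1}\le C$, which is exactly what makes the required $L^\infty$-convergence of the multipliers available; once it is in hand, everything else is standard analysis.
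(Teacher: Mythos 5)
You have correctly located the crux of the theorem --- the uniform bound $\|\,|\nabla u_p|^{p-1}\|_{L^\infty(\Om)}\le C$ --- but you do not prove it: what you offer is a plan (differentiate the equation and run a Caccioppoli/Moser iteration while tracking the dependence on $p$, or exhibit $(1+C/p)$-Lipschitz barriers). This is a genuine gap, and not a small one. A standard gradient estimate for the equation $\Div\bigl((W'(|\nabla u_p|^2)+|\nabla u_p|^{p-2})\nabla u_p\bigr)=-\phi'(u_p)$ produces constants that degenerate as $p\to\infty$, and what is needed is not merely $\|\nabla u_p\|_{\infty}\le C$ but the much stronger rate $\|\nabla u_p\|_{\infty}\le 1+C/p$; nothing in your sketch indicates how the iteration or the comparison functions would deliver that. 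The paper obtains the bound by a different device: the Payne--Philippin $P$-function $\Psi(x;2)=\int_0^{|\nabla u_p|^2}G(s)\,ds+2\frac{p-1}{p}|\nabla u_p|^p+2\phi(u_p)$ attains its maximum at a critical point of $u_p$ (this is where the convexity of $\Om$, the $C^2$ boundary and the ellipticity condition \eqref{E:ellipticity} enter), whence $|\nabla u_p|^p\le 4\max_{\overline\Om}\phi(u_p)$, which is uniformly bounded by Theorem \ref{T:primal-intro}. Without this (or an equivalent) argument your proof does not close; everything downstream depends on it.

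Granting the bound, the rest of your argument is essentially sound but follows a different route from the paper's. You derive a.e.\ convergence of the gradients from a Minty-type monotonicity argument with $u_p-u$ as test function (be aware that strict monotonicity of $\xi\mapsto W'(|\xi|^2)\xi$ is not guaranteed by \eqref{E:convexity}--\eqref{E:ellipticity}, which permit degeneracy at $\xi=0$; the paper instead invokes Boccardo's result for this step), then obtain assertion 1 as the product of the weak-$*$ limit of $\lambda_p=|\nabla u_p|^{p-2}$ with the strong limit of $\nabla u_p$, and the complementarity by localizing on $\{|\nabla u|\le 1-\delta\}$. The paper instead tests the Euler--Lagrange equation with $u-g$ and with $u_p-g$ to show $\int_\Om|\nabla u_p|^p\to\int_\Om A\cdot\nabla u$ for the weak-$*$ limit $A$ of $|\nabla u_p|^{p-2}\nabla u_p$, and deduces the pointwise identity $|A|=A\cdot\nabla u$, from which the representation $A=\lambda\nabla u$, the sign condition and the complementarity all fall out at once. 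Both routes work; yours is a reasonable alternative for the soft part, but the theorem is not proved until the uniform estimate on $|\nabla u_p|^p$ is actually established.
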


For the elastoplastic torsion problem \eqref{P:torsion}, \citet{brezis} proved the uniqueness 
of $\lambda\in L^\infty(\Om)$ verifying \eqref{E:system-intro1}--\eqref{E:system-intro3}. Moreover, using the known
explicit solution for the primal problem on the disk, we obtain an explicit expression for $\lambda$, 
to which the whole sequence $\{ |\nabla u_p|^{p-2}\}_{p\geq p_1}$ must converge. These explicit solutions
make possible to validate numerically our method.

\section{Primal convergence results}\label{sec:primal} 
In this section we provide the proof of Theorem \ref{T:primal-intro}.
The proof is divided into a series of lemmas. For clarity of the exposition we 
put $r=1$, the general case being completely analogous.

\begin{lemma}[Compactness]\label{lemma-compactness}
 we have that:
\begin{enumerate}
\item $\sup_{p\geq s} \frac{1}{p} \|T u_p\|_{p,\Omega}^p<+\infty$, where
$$
\|Tu_p \|_{p,\Omega}^{p}= \int_\Omega T(x,\nabla u_p(x))^{p} dx
$$
\item Let $p_1=\max\{N+1,s\}$. For every $q> 1$, $\{u_p\}_{p\geq p_1}$ is bounded in $W^{1,q}(\Omega;\RR^m)$
\item $\{u_p\}_{p\geq p_1}$ is relatively compact in $C(\overline{\Omega};\RR^m)$.
\item For every uniform cluster point $u_\infty$ of $\{u_p\}_{p\geq
p_1}$, we have that $$u_\infty\in g+ W^{1,\infty}_0(\Omega;\RR^m).$$
\item  If $u_{p_j} \to u_\infty$ in $C(\overline{\Omega};\RR^m)$ then $u_{p_j} \rightharpoonup
u_\infty$ weakly in $W^{1,q}(\Omega;\RR^m)$ for every $q\in[p_1,\infty[$.
\end{enumerate}

\end{lemma}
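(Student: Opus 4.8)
The plan is to establish the five assertions of Lemma \ref{lemma-compactness} in order, since each feeds the next.

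\textbf{Step 1 (uniform penalty bound).} The key observation is that $g$ is admissible for every penalized problem \eqref{e.prob.p}: since $T(x,\nabla g(x))\leq 1$ a.e., we have $\int_\Omega Tg(x)^{p}dx\leq \LN(\Omega)<\infty$. Hence $J_p(u_p)\leq J_p(g)=J(g)+\tfrac1p\int_\Omega Tg^p\,dx\leq J(g)+\LN(\Omega)$ for all $p\geq s$. Since $J$ is bounded below, say by $-M$, we get $\tfrac1p\|Tu_p\|_{p,\Omega}^p=J_p(u_p)-J(u_p)\leq J(g)+\LN(\Omega)+M$, which is assertion (i). (In the integral-form case one uses the lower bound $\gamma_1\in L^1$ for $J$.)

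\textbf{Step 2 ($W^{1,q}$ bounds).} From (i), $\int_\Omega T u_p^p\,dx\leq Cp$. Fix $q>1$ and take $p\geq\max\{p_1,q\}$. By the coercivity lower bound \eqref{e.growth} (with $r=1$), $T(x,\xi)\geq\alpha_1(1+|\xi|)$, so $|\nabla u_p(x)|^p\leq\alpha_1^{-p}Tu_p(x)^p$ pointwise, giving $\|\nabla u_p\|_{p,\Omega}^p\leq\alpha_1^{-p}Cp$. Then by Jensen/Hölder, $\|\nabla u_p\|_{q,\Omega}\leq\LN(\Omega)^{1/q-1/p}\|\nabla u_p\|_{p,\Omega}\leq\LN(\Omega)^{1/q-1/p}\alpha_1^{-1}(Cp)^{1/p}$, and since $(Cp)^{1/p}\to1$ as $p\to\infty$, the right side is bounded uniformly in $p$. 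Combined with the boundary condition $u_p-g\in W^{1,p}_0\subset W^{1,q}_0$ and Poincaré's inequality (which controls $\|u_p\|_{q,\Omega}$ by $\|\nabla u_p\|_{q,\Omega}$ plus a term depending on $g$), this gives assertion (ii).

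\textbf{Step 3 (compactness in $C(\overline\Omega)$, the main point).} Take $q>N$, for instance $q=N+1\le p_1$. By (ii), $\{u_p\}_{p\geq p_1}$ is bounded in $W^{1,q}(\Omega;\RR^m)$, and by the Morrey embedding $W^{1,q}\hookrightarrow C^{0,1-N/q}(\overline\Omega)$ is compact, so $\{u_p\}$ is relatively compact in $C(\overline\Omega;\RR^m)$, proving (iii). For (iv), let $u_{p_j}\to u_\infty$ uniformly. For any fixed $q$, $\{u_{p_j}\}$ is bounded in $W^{1,q}$, so a subsequence converges weakly in $W^{1,q}$; the limit must be $u_\infty$ by uniqueness of limits in distributions, and $u_\infty-g\in W^{1,q}_0$ since this space is weakly closed. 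Now fix $x$ and pass to the limit in the Lipschitz-type estimate: actually the cleanest route is to note that for each fixed $q$, weak lower semicontinuity of the norm gives $\|\nabla u_\infty\|_{q,\Omega}\leq\liminf_j\|\nabla u_{p_j}\|_{q,\Omega}\leq\sup_{p}\LN(\Omega)^{1/q}\alpha_1^{-1}(Cp)^{1/p}$; but one wants this uniform in $q$. Letting $q\to\infty$ and using that the bound $\LN(\Omega)^{1/q}\alpha_1^{-1}\sup_p(Cp)^{1/p}$ has a finite limit (in fact $\to\alpha_1^{-1}$ if one is careful, but any finite bound suffices), we get $\nabla u_\infty\in L^\infty(\Omega;\RR^{m\times N})$, hence $u_\infty\in g+W^{1,\infty}_0(\Omega;\RR^m)$, which is (iv). Finally, (v): given $u_{p_j}\to u_\infty$ in $C(\overline\Omega)$ and any $q\in[p_1,\infty[$, the sequence is bounded in $W^{1,q}$ by (ii), so every subsequence has a further subsequence converging weakly in $W^{1,q}$ to a limit which, by uniqueness of distributional limits, equals $u_\infty$; since every subsequence has a sub-subsequence with the same weak limit, the whole sequence $u_{p_j}\weak u_\infty$ in $W^{1,q}$.

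\textbf{Main obstacle.} The delicate point is step 3, specifically getting the $q$-independent Lipschitz bound in (iv): one must pass to the limit first in $p$ (for fixed $q$) via weak lower semicontinuity, and only then let $q\to\infty$, being careful that the constants $(Cp)^{1/p}$ — which blow the $W^{1,p}$ norm up by a factor that tends to $1$ — do not accumulate. The rest is a fairly routine deployment of coercivity, Hölder/Jensen, Poincaré, the Morrey compact embedding, and weak-closedness of $W^{1,q}_0$.
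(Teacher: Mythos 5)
Your proof is correct and, for items (i)--(iii) and (v), follows essentially the same route as the paper: the admissibility of $g$ gives the uniform penalty bound, the coercivity lower bound on $T$ plus H\"older and a $p$-uniform Poincar\'e constant give the $W^{1,q}$ bounds, the Morrey/Rellich--Kondrachov embedding for $q=N+1$ gives compactness in $C(\overline{\Omega};\mathbb{R}^m)$, and the standard subsequence argument gives (v). The only genuine divergence is in (iv): the paper derives the Lipschitz regularity of $u_\infty$ from the uniform H\"older estimate $|u_p(x)-u_p(y)|\leq C_3|x-y|^{1-N/p}$ (Morrey's inequality with a constant uniform in $p\geq q>N$) and passes to the limit in this pointwise estimate, whereas you use weak lower semicontinuity of $\|\nabla\cdot\|_{q,\Omega}$ for each fixed $q$ and then let $q\to\infty$, using that a $q$-independent bound on the $L^q$ norms forces $\nabla u_\infty\in L^\infty$. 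Both are valid and of comparable length; the paper's version gives the H\"older modulus explicitly (which is what feeds the $C^\alpha$ compactness claim of Theorem 2.1(i)), while yours avoids invoking the $p$-uniformity of the Morrey constant, needing only the already-established $L^q$ gradient bounds. One small point worth making explicit either way: the conclusion $u_\infty\in g+W^{1,\infty}_0$ combines $\nabla u_\infty\in L^\infty$ with the weak closedness of $g+W^{1,q}_0$ (or, as in the paper, with $u_\infty=g$ on $\partial\Omega$ from uniform convergence), which you do state.
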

\begin{proof} From the optimality of $u_p$ 
it follows that
\begin{equation}\label{E:cota from opt}
\alpha+\frac{1}{p}\|T u_p \|_{p,\Omega}^p \leq
J(g)+\frac{1}{p}\|T g \|_{\infty,\Omega}^p
\LN(\Omega),
\end{equation}
where $\alpha= \inf \{J(v)\mid v\in W^{1,s}(\Omega;\RR^m)\}\in\RR$
(recall that $J$ is supposed to be bounded from below). Using \eqref{E:g} we deduce that
$$
\sup_{p\geq s}\frac{1}{p}\|T u_p\|_{p,\Omega}^p<+\infty,
$$
hence
$$
C_1:=\sup_{p\geq s}\|T u_p
\|_{p,\Omega}<+\infty.
$$
In particular,
$$
 \| \nabla u_p\|_{p,\Omega} \leq \alpha_1 C_1.
$$
 On the other hand, the Poincar\'e inequality yields
$$
\Vert u \Vert_{p,\Omega} \leq C(\Omega,p)\left(\|\nabla
u\|_{p,\Omega}+ \| \nabla g \|_{p,\Omega}\right)+\|g\|_{p,\Omega},
$$
for every $u \in g + W^{1,p}_0(\Omega;\RR^m)$ and a suitable
constant $C(\Omega,p)>0$. Combining these estimates, and recalling
\citep{adams} that the constant $C(\Omega,p)$  may be chosen such
that
$$
\sup\limits_{p\in[N+1,\infty[} C(\Omega,p) < +\infty,
$$
 we deduce that there exists a constant $C_2>0$ such that
$$
\forall p\in[
p_1,+\infty[,~\|u_p\|_{1,p,\Omega}=\|u_p\|_{p,\Omega}+\|\nabla
u_p\|_{p,\Omega}\leq C_2,$$ where $p_1=\max\{N+1,s\}$.
 In particular, $\{w_p:=u_p-g \}_{p\geq p_1}$ is bounded in $W^{1,q}_0(\Omega;\RR^m)$ for each $q\geq
p_1$, hence for every $q>1$ by H\"older inequality. Since $p_1>N$, we deduce that $\{w_p\}_{p\geq p_1}$ is
relatively compact in $C(\overline{\Omega};\RR^m)$ by the
Rellich-Kondrachov theorem (since we deal with $W^{1,p_1}_0$ we do
not require any regularity condition on $\partial \Omega$). Thus, we
deduce that $\{u_p\}_{p\geq p_1}$ is relatively compact in
$C(\overline{\Omega};\RR^m)$.

Let $u_\infty$ be a  cluster point of $\{u_p\}_{p\geq p_1}$ in
$C(\overline{\Omega};\RR^m)$. First, we  prove that 
$u_\infty \in W^{1,\infty}(\Omega;\RR^m)$. By Morrey's theorem there exists a
constant $C'(\Omega,p)>0$ such that
\begin{equation*}
|w_p(x)-w_p(y)| \leq C'(\Omega,p)||w_p||_{1,p,\Omega} |x-y|^{1 - N/p}
\end{equation*}
for every $x,y \in \Omega$. In fact, the constant can be chosen in such a way that
$$\sup\limits_{p\in[q,\infty[} C'(\Omega,p) < +\infty$$ for every $q>N$ (see \cite{adams}).
Therefore, we conclude that for a suitable constant $C_3>0$, $|u_p(x)-u_p(y)| \leq C_3 |x-y|^{ 1 -N/p}$, for every
 $x,y\in\Omega$ and $p \in [p_1,\infty[$. We deduce that $$|u_\infty(x)-u_\infty(y)| \leq C_3 |x-y|,$$ then
$u_\infty \in W^{1,\infty}(\Omega;\RR^m)$. Of course, $u_\infty =g$
on $\partial\Omega$.

Next, fix $q \in]1,\infty[$. From our previous analysis it follows
that $\{ u_p \}_{p \in [p_1,\infty[}$ is bounded in
$W^{1,q}(\Omega;\RR^m)$ and therefore   relatively compact for the
weak topology of $W^{1,q}(\Omega;\RR^m)$. Consequently, if $p_j
\to\infty$ is a sequence such that $u_{p_j} \to u_\infty$ uniformly
on $\overline{\Omega}$, then $u_{p_j} \rightharpoonup u_\infty$
weakly in $W^{1,q}(\Omega;\RR^m)$. \end{proof}

\begin{lemma}\label{factibility of the limit} If $u_\infty$ is a cluster point of $\{u_p\mid p\to\infty\}$ in $C(\overline{\Omega};\RR^m)$
then
 $ \|T u_\infty\|_{\infty,\Omega}\leq 1.$
Moreover, $u_\infty$ is an optimal solution to \eqref{e.prob2}, and
we have that
$$\lim\limits_{p\to\infty}J_p(u_p)=\lim\limits_{p\to\infty}J(u_p)=J(u_\infty)=\min
J_\infty.$$
\end{lemma}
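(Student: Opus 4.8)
The plan is to split the argument into three parts: feasibility of the limit, the upper bound $\limsup_p J(u_p) \le \min J_\infty$, and the lower bound $\liminf_p J(u_p) \ge J(u_\infty)$ together with the identification of all the limits.

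First I would prove feasibility, i.e.\ $\|T u_\infty\|_{\infty,\Omega}\le 1$. Fix a sequence $p_j\to\infty$ with $u_{p_j}\to u_\infty$ uniformly on $\overline\Omega$; by Lemma \ref{lemma-compactness}(v), $u_{p_j}\rightharpoonup u_\infty$ weakly in $W^{1,q}(\Omega;\RR^m)$ for every $q\ge p_1$. Since $T$ is quasiconvex and satisfies the growth bound \eqref{e.growth}, the functional $v\mapsto \int_\Omega T(x,\nabla v(x))^q\,dx$ is sequentially weakly lower semicontinuous on $W^{1,q}$ (recall $T^q$ is quasiconvex for $1<q<\infty$, and its $q$-growth is controlled, so Theorem 8.29 in \citep{dacorogna} applies). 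Fix any exponent $q$ and restrict to $p_j\ge q$; then by weak lower semicontinuity and Hölder,
\[
\|T u_\infty\|_{q,\Omega} \le \liminf_{j\to\infty}\|T u_{p_j}\|_{q,\Omega}
\le \liminf_{j\to\infty}\LN(\Omega)^{\frac1q-\frac1{p_j}}\|T u_{p_j}\|_{p_j,\Omega}
\le \LN(\Omega)^{1/q} C_1,
\]
using the bound $\|T u_p\|_{p,\Omega}\le C_1$ from Lemma \ref{lemma-compactness}(i). Letting $q\to\infty$ gives $\|T u_\infty\|_{\infty,\Omega}\le C_1$; to sharpen the constant to $1$, I would instead apply the same estimate with $C_1$ replaced using \eqref{E:cota from opt}: since $\tfrac1p\|T u_p\|_{p,\Omega}^p$ is bounded, for fixed $q$ one has $\|T u_{p_j}\|_{q,\Omega}\le \LN(\Omega)^{1/q}(p_j M)^{1/p_j}$ for a constant $M$, and $(p_jM)^{1/p_j}\to 1$, whence $\|T u_\infty\|_{q,\Omega}\le \LN(\Omega)^{1/q}$ for every $q$ and therefore $\|T u_\infty\|_{\infty,\Omega}\le 1$. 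This together with Lemma \ref{lemma-compactness}(iv) shows $u_\infty$ is feasible for \eqref{e.prob2}, so $J_\infty(u_\infty)=J(u_\infty)$ and $J(u_\infty)\ge\min J_\infty$.

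Next, the upper bound. Let $\alpha=\inf\{J(v):v\in W^{1,s}\}$. From \eqref{E:cota from opt} and \eqref{E:g} we have, for $p_j\ge s$,
\[
J(u_{p_j}) \le J_{p_j}(u_{p_j}) \le J(g) + \tfrac1{p_j}\|Tg\|_{\infty,\Omega}^{p_j}\LN(\Omega) \le J(g) + \tfrac1{p_j}\LN(\Omega),
\]
since $\|Tg\|_{\infty,\Omega}\le 1$ by \eqref{E:g}. More generally, testing the optimality of $u_{p_j}$ against \emph{any} feasible competitor $v\in g+W_0^{1,s}$ with $\|Tv\|_{\infty,\Omega}\le 1$ (after noting $v\in W^{1,\infty}$ so it lies in the admissible class of \eqref{e.prob.p} for all $p$), we get $J(u_{p_j})\le J_{p_j}(u_{p_j})\le J_{p_j}(v) = J(v)+\tfrac1{p_j}\int_\Omega Tv^{p_j}\le J(v)+\tfrac1{p_j}\LN(\Omega)$. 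Taking $j\to\infty$ yields $\limsup_j J(u_{p_j})\le J(v)$ for every such $v$, and also $\limsup_j J_{p_j}(u_{p_j})\le J(v)$; minimizing over $v$ gives $\limsup_j J(u_{p_j})\le\min J_\infty$ and likewise for $J_{p_j}(u_{p_j})$.

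Finally, the lower bound and identification of limits. By the sequential weak lower semicontinuity of $J$ on $W^{1,s}(\Omega;\RR^m)$ (a standing assumption; note $u_{p_j}\rightharpoonup u_\infty$ weakly in $W^{1,s}$ by Lemma \ref{lemma-compactness}(v) since $s\le p_1$), we get $J(u_\infty)\le\liminf_j J(u_{p_j})$. Chaining the inequalities,
\[
\min J_\infty \;\le\; J(u_\infty) \;\le\; \liminf_j J(u_{p_j}) \;\le\; \limsup_j J(u_{p_j}) \;\le\; \min J_\infty,
\]
so all are equalities; in particular $u_\infty$ attains $\min J_\infty$ and $\lim_j J(u_{p_j})=J(u_\infty)=\min J_\infty$. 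Since $0\le J_{p_j}(u_{p_j})-J(u_{p_j}) = \tfrac1{p_j}\int_\Omega Tu_{p_j}^{p_j}\le \tfrac{M}{p_j}\to 0$ by Lemma \ref{lemma-compactness}(i), also $\lim_j J_{p_j}(u_{p_j})=J(u_\infty)=\min J_\infty$. As the cluster point $u_\infty$ and the subsequence were arbitrary, the stated equalities hold along the full net $p\to\infty$ in the sense of the limits existing.

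The main obstacle is the feasibility step: passing the hard $L^\infty$ constraint to the limit from the soft $L^p$ penalties. The delicate point is that weak lower semicontinuity gives control of $\|T u_\infty\|_{q,\Omega}$ for fixed $q$ only in terms of $\|T u_{p_j}\|_{p_j,\Omega}$, which grows like $(p_j)^{1/p_j}$ after accounting for the $\tfrac1p$ factor in the penalty — one must verify that this growth washes out as $p_j\to\infty$ so that the limit constant is exactly $1$ rather than merely finite, and that the quasiconvexity of $T^q$ with the correct growth exponent legitimately yields the weak lower semicontinuity used.
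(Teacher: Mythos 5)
Your proposal is correct and follows essentially the same route as the paper: weak lower semicontinuity of $v\mapsto\|Tv\|_{q,\Omega}$ combined with H\"older and the observation that $(pC)^{1/p}\to 1$ to pass the penalty bound to the $L^\infty$ constraint, then the standard sandwich $\min J_\infty\le J(u_\infty)\le\liminf J(u_{p_j})\le\limsup J_{p_j}(u_{p_j})\le\min J_\infty$ via testing against feasible competitors, and a compactness argument for the full net. The only differences are expository (you justify the weak lower semicontinuity via quasiconvexity of $T^q$ and note explicitly why feasible competitors lie in $W^{1,\infty}$, both of which the paper leaves implicit).
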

\begin{proof}  Let $u_{p_j} \to u_\infty$ in
$C(\overline{\Omega};\RR^m)$ and fix $q\in]1,\infty[$. By Lemma
\ref{lemma-compactness}, $u_{p_j} \rightharpoonup u_\infty$ weakly
in $W^{1,q}(\Omega;\RR^m)$. It follows from the weak lower
semicontinuity in $W^{1,q}(\Omega;\RR^m)$ of $
v\mapsto\|T v \|_{q,\Omega},$ that
$$
\|T u_\infty\|_{q,\Omega}\leq\liminf_{j\to\infty}\|T u_{p_j}\|_{q,\Omega}.
$$
For every $p\in[q,\infty[$, the H\"older inequality yields
$$
\|T u_p\|_{q,\Omega}\leq\|T u_p\|_{p,\Omega} \LN(\Omega)^{\frac{1}{q}-\frac{1}{p}}. $$ Then,
Lemma \ref{lemma-compactness} ensures that
$$
\|T u_p\|_{q,\Omega}\leq
(pC)^{\frac1p}\LN(\Omega)^{\frac{1}{q}-\frac{1}{p}}
$$
for some constant $C>0$. Hence
$$
\|T u_\infty\|_{q,\Omega}\leq\LN(\Omega)^{\frac{1}{q}}
$$
Letting $q\to\infty$, we get the desired inequality.\\

Let $v\in g+W^{1,\infty}(\Omega;\RR^m)$ with $\|T v\|_{\infty,\Omega}\leq 1$. By optimality of $u_p$ we have that
$$
J(u_p)\leq J_p(u_p)\leq J_p(v)=J(v)+\frac{1}{p}\|T v\|_{p,\Omega}^p.
$$
Since $\|T v\|_{\infty,\Omega}\leq 1$, we have that
$$
\limsup_{p\to\infty}J(u_p)\leq \limsup_{p\to\infty} J_p(u_p)\leq
\limsup_{p\to\infty}J_p(v)=J(v).
$$
As $v$ is arbitrary, we obtain that
$$
\limsup_{p\to\infty}J(u_p)\leq \limsup_{p\to\infty} J_p(u_p)\leq
\inf J_\infty.
$$
Now, let $u_{p_j} \to u_\infty$ in $C(\overline{\Omega};\RR^m)$. By
the weak lower semicontinuity of $J$, we have that
$$
J(u_\infty)\leq \liminf_{j\to\infty}J(u_{p_j}),
$$
and due to the previous lemmas, we know that
$J(u_\infty)=J_\infty(u_\infty)$. This proves the optimality of
$u_\infty$ and moreover
$$
\lim_{j\to\infty}J_{p_j}(u_{p_j})=\lim_{j\to\infty}J(u_{p_j})=\min
J_\infty.
$$
Finally, note that, up to a subsequence, the same is valid for an
arbitrary sequence $\{p_k\}_{k\in\NN}$ with $p_k\to \infty$. This
fact together with a compactness argument proves indeed the result.
\end{proof}

\section{Dual convergence results}
In this section we are concerned with the existence and approximation of Lagrange multipliers for the constrained
problem \eqref{e.prob}. The techniques used to this end does not allow the great degree of generality as the primal results 
of Section \ref{sec:primal}.
We shall restrict ourselves to particular cases where the regularity of solutions is known. 
More precisely, we consider the following instances of \eqref{e.prob}
$$
\min \left\{J(v):= \int_\Om [ W(|\nabla v|^2) + \phi(v) ]\ : \ |\nabla v|\leq 1,\ v\in g+H_0^1(\Om) \right\}.
$$
We suppose that $g$ is a real constant, and additionally
\begin{eqnarray}
t\mapsto W(t^2)\ and\ \phi\ \text{are convex and of class}\ C^2(\RR) \label{E:convexity} \\ 
  G(s):= W'(s)+2s W''(s)>0, \quad \mbox{for}\ s>0.  \label{E:ellipticity}
 \end{eqnarray}

Let us consider the penalized problem 
$$
\min \left\{ J(v)+\frac{1}{p} \int |\nabla u|^p : v\in g+H_0^1(\Om) \right\}.
$$
By the convexity assumptions on the functions $W$ and $\phi$, that problem has a unique solution $u_p$ which 
is a weak solution of the Euler-Lagrange equation:
\begin{equation}\label{E:p-equation}
 \Div ( (W'(|\nabla u_p|^2)+|\nabla u_p|^{p-2}) \nabla u_p)=-\phi'(u_p). 
\end{equation}

Let us define:
$$
\Psi(x;\alpha)=\int_0^{|\nabla u_p|^2} G(s) ds + 2 \frac{p-1}{p} |\nabla u_p|^p + \alpha \phi(u_p)
$$

Note that by \eqref{E:ellipticity}, $|\nabla u_p|^p + \alpha \phi(u_p) \leq \Psi(x;\alpha)$. Therefore  if we 
suceed at obtaining uniform bounds for $\Psi$ we can deduce thereof bounds for the sequence $|\nabla u_p|^p$.
Maximum principles of  \citet{Payne-Philippin77,Payne-Philippin79} state that under mild conditions 
the maximum of $\Psi(\cdot,\alpha)$ is attained at a critical point of $u_p$. In such a point $\Psi(x,\alpha)=\alpha \phi(u_p(x))$
and we can conclude using the uniforms bounds on $u_p$ obtained in Section \ref{sec:primal}.
The application of maximum principle techniques require to  work with classical ($C^2(\Om)$) solutions. 
Results of \citet{Uhlenbeck}, \citet{Tolksdorf} and \citet{Lieberman} show that bounded solutions to equations of the type 
\eqref{E:p-equation} are $C^{1,\alpha}(\overline \Om)$-regular, provided that hypothesis \eqref{E:ellipticity} holds. Higher regularity 
can be obtained by a bootstrap argument at points where $\nabla u_p\neq 0$. However, if the function $G$ defined in 
\eqref{E:ellipticity} is degenerate, \emph{i.e.} $G(0)=0$, a further regularization is necessary \citep{Kawohl}.
Following a classic procedure \citep[see eg.][]{Evans-Gangbo,BDBM,SAKAGUCHI,DB} the term $|\nabla u_p|^p$ is regularized 
by $(\eps^2+|\nabla u_p|^2)^{p/2}$ to obtain a sequence of regular functions $u_p^\eps$ converging to $u_p$ pointwise and in $W^{1,p}$ norm  
as $\eps\to 0$. In this way estimations on $u_p$ can be obtained by approximation.

\begin{theorem}
Under hypothesis \eqref{E:convexity}- \eqref{E:ellipticity}, if $\Om$ is convex and $\partial \Om\in C^2$,
then the sequence $\{ |\nabla u_p|^p\}_{p\geq p_1}$ is uniformly bounded in $L^\infty(\Om)$.
 \end{theorem}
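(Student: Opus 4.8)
The strategy is exactly the one outlined in the paragraphs preceding the statement: work with an auxiliary $P$-function $\Psi(x;\alpha)$ built from $u_p$, show it is subharmonic for a suitable (degenerate) elliptic operator associated to the Euler--Lagrange equation \eqref{E:p-equation}, invoke the maximum principle of Payne--Philippin to conclude that $\max_{\overline\Om}\Psi(\cdot;\alpha)$ is attained either at a critical point of $u_p$ or on $\partial\Om$, and in either case control the right-hand side by the bounds already obtained in Section \ref{sec:primal}. Since the maximum-principle machinery requires classical solutions, I would first carry out the $\eps$-regularization: replace $|\nabla u_p|^p$ by $(\eps^2+|\nabla u_p|^2)^{p/2}$, obtain $C^2$ solutions $u_p^\eps$ by the cited regularity results (\citet{Uhlenbeck,Tolksdorf,Lieberman} for $C^{1,\alpha}$, then bootstrap where the gradient does not vanish), prove the estimate uniformly in $\eps$, and pass to the limit $\eps\to 0$ using $u_p^\eps\to u_p$ in $W^{1,p}$.

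\medskip

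\noindent\textbf{Key steps.}
First I would fix $p\geq p_1$ and $\eps>0$, write $w=u_p^\eps$, and set $v=|\nabla w|^2$. Differentiating the regularized equation and combining with the Bochner-type identity $\tfrac12\Delta v=|D^2w|^2+\nabla w\cdot\nabla(\Delta w)$, one derives a differential inequality of the form $L\Psi\geq 0$, where $L$ is the linearized operator $L\zeta=\sum_{ij}a_{ij}(x)\partial_{ij}\zeta+\sum_i b_i(x)\partial_i\zeta$ with $a_{ij}$ elliptic away from critical points (this is where \eqref{E:ellipticity}, i.e. $G(s)>0$, is used) — the precise algebra is the content of the Payne--Philippin maximum principle, so I would cite it rather than reprove it, checking only that the structural hypotheses (convexity of $t\mapsto W(t^2)$, convexity of $\phi$, the growth \eqref{E:ellipticity}) are met by the regularized integrand. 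Second, the maximum principle gives that $\Psi(\cdot;\alpha)$ attains its maximum over $\overline\Om$ either at an interior point $x_0$ where $\nabla w(x_0)=0$, in which case $\Psi(x_0;\alpha)=\alpha\phi(w(x_0))$, or on $\partial\Om$; here convexity of $\Om$ together with $\partial\Om\in C^2$ and the constant boundary datum $g$ is used exactly as in \citet{Payne-Philippin77,Payne-Philippin79} to rule out (or else control) the boundary maximum via the sign of the normal derivative. Third, from Section \ref{sec:primal} (Lemma \ref{lemma-compactness} and Theorem \ref{T:primal-intro}(i)) the functions $u_p$ — and hence, after passing to the limit, one needs $u_p^\eps$ as well — are uniformly bounded in $C(\overline\Om)$, so $\alpha\phi(u_p(x_0))\leq C$ with $C$ independent of $p$; choosing $\alpha$ fixed and using $|\nabla w|^p+\alpha\phi(w)\leq\Psi(x;\alpha)\leq\max\Psi(\cdot;\alpha)\leq C+\alpha\sup|\phi(u_p)|$ yields $\||\nabla u_p^\eps|^p\|_{\infty,\Om}\leq C'$ uniformly in $p$ and $\eps$. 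Letting $\eps\to 0$ preserves the bound.

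\medskip

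\noindent\textbf{Main obstacle.}
The delicate point is the uniformity in $p$: the maximum principle is applied for each fixed $p$, and one must make sure that every constant entering the estimate — the ellipticity constants of $L$, the boundary term on $\partial\Om$, and above all the bound $\sup_\Om|\phi(u_p)|$ — does not blow up as $p\to\infty$. The boundary term is handled by convexity of $\Om$ (which forces the right sign, independently of $p$), and the $C^0$ bound on $u_p$ is supplied uniformly by Section \ref{sec:primal}; the genuinely subtle issue is the degeneracy when $G(0)=0$, which is why the $\eps$-regularization is indispensable and why one must track that none of the constants degenerate as $\eps\to 0$. A secondary technical nuisance is verifying that the first-derivative coefficients $b_i$ in $L$ — which involve $\phi''(w)$ and derivatives of $W$ — are bounded on the relevant range of $v$ and $w$, again uniformly in $p$; this follows from the $C^2$ assumption \eqref{E:convexity} together with the already-established uniform bounds on $\|u_p\|_\infty$, but it should be stated explicitly.
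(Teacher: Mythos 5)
Your proposal follows essentially the same route as the paper: apply the Payne--Philippin maximum principle to the $P$-function $\Psi(x;\alpha)$, conclude that its maximum is attained at a critical point of $u_p$ where $\Psi=\alpha\phi(u_p)$, and then bound $|\nabla u_p|^p\leq\Psi(x;\alpha)-\alpha\phi(u_p)$ uniformly in $p$ via the $C(\overline\Om)$ bounds of Theorem \ref{T:primal-intro}. The paper's proof is in fact terser (it simply cites Corollary 1 of \citet{Payne-Philippin79} with $\alpha=2$), whereas you spell out the $\eps$-regularization and the uniformity in $p$ of the constants, which the paper only discusses informally before the statement; this is a faithful and somewhat more careful rendering of the same argument.
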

\begin{proof}
By \citet[][Corollary 1]{Payne-Philippin79}, the function $\Psi(x;2)$ attains its maximum at a critical point of $u_p$, therefore 
$$
|\nabla u_p|^p + 2 \phi(u_p) \leq \Psi(x;\alpha)\leq \max_{\bar\Om} \Psi(x;2) \leq 2 \max_{\bar\Om} \phi(u_p(x)),
$$
whence 
$$
|\nabla u_p|^p \leq 4 \max_{\bar\Om} \phi(u_p)<+\infty
$$
and conclude by Theorem \ref{T:primal-intro} and the continuity of $\phi$.
\end{proof}

\begin{corollary}\label{C:convergences}
Let $u_\infty\in C^{{0,1}}(\overline \Om)$ be a cluster point of $\{u_p\}_{p\geq p_1}$. Then, passing if neccesary to a further subsequence, 
\begin{enumerate}
\item\label{iii:pointwise-conv} $\nabla u_p(x) \to \nabla u_\infty(x)$ for a.e $x\in\Om$.
 \item\label{i:convergence-grad} $\nabla u_p \weakstar \nabla u_\infty$ in the weak$-*$ topology of $L^\infty(\Om)$.
 \item\label{ii:convergence-lambda} There exists $A\in L^\infty(\Om)^n$ such that the sequence $\{|\nabla u_p|^{p-2}\nabla u_p\}_{p\geq p_1}$
converges to $A$ in the weak$-*$ topology.
\end{enumerate}
 \end{corollary}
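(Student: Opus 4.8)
The plan is to deduce all three assertions from the uniform bound on $\{|\nabla u_p|^p\}_{p\ge p_1}$ in $L^\infty(\Om)$ just established: writing $|\nabla u_p|^p\le C$ a.e., this yields $\||\nabla u_p|\|_{\infty,\Om}\le C^{1/p}$ and $\||\nabla u_p|^{p-2}\nabla u_p\|_{\infty,\Om}=\||\nabla u_p|\|_{\infty,\Om}^{\,p-1}\le C^{(p-1)/p}$, two families bounded uniformly in $p$. Throughout, fix a subsequence (not relabelled) along which $u_p\to u_\infty$ in $C(\overline{\Om})$; by the last item of Lemma \ref{lemma-compactness}, $u_p\rightharpoonup u_\infty$ weakly in $W^{1,q}(\Om)$ for every finite $q$, so $\nabla u_p\rightharpoonup\nabla u_\infty$ weakly in $L^2(\Om)^n$, while Lemma \ref{factibility of the limit} gives $|\nabla u_\infty|\le1$ a.e.\ in $\Om$.

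Items \ref{i:convergence-grad} and \ref{ii:convergence-lambda} are soft. Since $\{\nabla u_p\}$ is bounded in $L^\infty(\Om)^n$ and converges to $\nabla u_\infty$ weakly in $L^2(\Om)^n$, it converges to $\nabla u_\infty$ in the weak-$*$ topology of $L^\infty(\Om)^n$, which is \ref{i:convergence-grad}; and since $\{|\nabla u_p|^{p-2}\nabla u_p\}$ is bounded in $L^\infty(\Om)^n$, the dual of the separable space $L^1(\Om)^n$, a further subsequence converges weak-$*$ to some $A\in L^\infty(\Om)^n$, which is \ref{ii:convergence-lambda}. (Once \ref{iii:pointwise-conv} is proved, \ref{i:convergence-grad} also follows at once by dominated convergence.)

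The substance is the almost-everywhere gradient convergence \ref{iii:pointwise-conv}, for which I would run the classical monotonicity argument. Testing the weak form of \eqref{E:p-equation} with $u_p-u_\infty\in H_0^1(\Om)$ --- a legitimate test function, since $u_\infty-g\in W_0^{1,\infty}(\Om)\subset H_0^1(\Om)$ and $|\nabla u_p|^{p-2}\nabla u_p\in L^\infty(\Om)^n\subset L^2(\Om)^n$ --- gives
\[
\int_\Om\bigl(W'(|\nabla u_p|^2)+|\nabla u_p|^{p-2}\bigr)\nabla u_p\cdot\nabla(u_p-u_\infty)\,dx=\int_\Om\phi'(u_p)(u_p-u_\infty)\,dx,
\]
whose right-hand side tends to $0$ because $u_p\to u_\infty$ uniformly while $\phi'(u_p)$ stays bounded. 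I then claim that $\int_\Om\bigl(W'(|\nabla u_\infty|^2)+|\nabla u_\infty|^{p-2}\bigr)\nabla u_\infty\cdot\nabla(u_p-u_\infty)\,dx\to0$: the summand carrying $W'(|\nabla u_\infty|^2)\nabla u_\infty\in L^\infty(\Om)^n$ tends to $0$ since $\nabla(u_p-u_\infty)\rightharpoonup0$ in $L^2$, whereas $|\nabla u_\infty|^{p-2}\nabla u_\infty$ is dominated by $1$ and, by dominated convergence using $|\nabla u_\infty|\le1$, converges strongly in $L^2$ to $\mathbf{1}_{\{|\nabla u_\infty|=1\}}\nabla u_\infty$, so that its pairing with the weakly null $\nabla(u_p-u_\infty)$ also tends to $0$. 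This dominated-convergence step, which is precisely where the feasibility of $u_\infty$ is used, is to my mind the only genuinely delicate passage.

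Subtracting the two displayed limits, and observing that both $\xi\mapsto W'(|\xi|^2)\xi$ and $\xi\mapsto|\xi|^{p-2}\xi$ are monotone, being the gradients of the convex functions $\tfrac12W(|\xi|^2)$ (cf.\ \eqref{E:convexity}) and $\tfrac1p|\xi|^p$, so that $\bigl(a_p(\nabla u_p)-a_p(\nabla u_\infty)\bigr)\cdot\nabla(u_p-u_\infty)$ with $a_p(\xi)=(W'(|\xi|^2)+|\xi|^{p-2})\xi$ is nonnegative and bounds from above the same expression written for $\xi\mapsto W'(|\xi|^2)\xi$ alone, I arrive at
\[
0\le\int_\Om\bigl(W'(|\nabla u_p|^2)\nabla u_p-W'(|\nabla u_\infty|^2)\nabla u_\infty\bigr)\cdot\nabla(u_p-u_\infty)\,dx\longrightarrow0.
\]
Finally, \eqref{E:ellipticity} gives more than mere ellipticity: $t\mapsto W(t^2)$ is $C^2$ with second derivative $2G(t^2)>0$ for $t\ne0$, hence strictly convex on $\RR$, and being even it is strictly increasing on $[0,\infty)$; therefore the radial map $\xi\mapsto\tfrac12W(|\xi|^2)$ is strictly convex on $\RR^n$ and its gradient $\xi\mapsto W'(|\xi|^2)\xi$ is strictly monotone. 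The nonnegative integrands in the last display thus tend to $0$ in $L^1(\Om)$, hence a.e.\ along a further subsequence; at such a point $x$ the bounded sequence $\{\nabla u_p(x)\}$ can cluster only at $\nabla u_\infty(x)$, by continuity and strict monotonicity, so $\nabla u_p(x)\to\nabla u_\infty(x)$ --- which is \ref{iii:pointwise-conv}.
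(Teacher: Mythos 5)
Your proof is correct, and for items (ii) and (iii) it takes the same route as the paper: the uniform $L^\infty(\Om)$ bound on $\{|\nabla u_p|^p\}$ from the preceding theorem gives uniform $L^\infty$ bounds on $\nabla u_p$ and on $|\nabla u_p|^{p-2}\nabla u_p$, and Banach--Alaoglu plus identification of the weak-$*$ limit with the weak $L^2$ limit does the rest. The genuine difference is in item (i), the a.e.\ convergence of the gradients: the paper disposes of it by citing the Boccardo--Murat almost-everywhere gradient convergence theorem, whereas you give a self-contained Minty-type monotonicity argument --- test \eqref{E:p-equation} with $u_p-u_\infty$, show that the pairing of $a_p(\nabla u_\infty)$ with $\nabla(u_p-u_\infty)$ vanishes in the limit (the only delicate point being the $p$-dependent term $|\nabla u_\infty|^{p-2}\nabla u_\infty$, which you correctly handle by dominated convergence using the feasibility $|\nabla u_\infty|\le 1$), discard the nonnegative $p$-Laplacian contribution by monotonicity, and conclude from the strict monotonicity of $\xi\mapsto W'(|\xi|^2)\xi$, which you rightly extract from \eqref{E:convexity}--\eqref{E:ellipticity} even when $G(0)=0$. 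Your route buys a proof that needs no external reference and exploits the uniform gradient bounds available in this specific setting (which is also why $u_p-u_\infty$ is an admissible test function here); the paper's citation is shorter and rests on a result valid in far more general, non-monotone and unbounded, situations. Both are sound.
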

\begin{proof}
 Assertion \ref{iii:pointwise-conv} is obtained from  \citet{Boccardo1992} using hypothesis \eqref{E:convexity}. 
 Points \ref{i:convergence-grad} and \ref{ii:convergence-lambda} are consequences of the Banach–Alaoglu Theorem. 
\end{proof}
\vspace{\baselineskip}
 
 We are now in position to state our existence and approximability result for both primal and dual solutions of 
 \eqref{e.prob}
 
\begin{theorem}\label{T:dual}
 Let $u$ be a cluster point of $\{u_p\}_{p\geq p_1}$ in $C(\overline{\Om})$ achieving the convergences of Corollary \ref{C:convergences}. 
 There exists $\lambda\in L^\infty(\Om)$ such that 
\begin{enumerate}
\item The sequence $\{|\nabla u_p|^{p-2}\nabla u_p\}_{p\geq p_1}$ weakly$-*$ converges to $\lambda\nabla u$. 
\item The primal-dual pair $(u,\lambda)$ satisfy the system
\begin{eqnarray}
 \Div( W'(|\nabla u|^2)\nabla u)+ \Div ( \lambda \nabla u)=-\phi'(u)\ in\ \mathcal{D}'.\label{complementarity-1} \\
 \lambda(x) \geq 0\ \text{a.e in}\ \Om.\label{complementarity-2} \\
 \lambda(x)(|\nabla u(x)|-1)=0\ \text{a.e in}\ \Om  \label{complementarity-3}
\end{eqnarray}

\end{enumerate}
\end{theorem}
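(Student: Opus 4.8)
The plan is to pass to the limit in the Euler--Lagrange equation \eqref{E:p-equation} for $u_p$ and to identify the limit of the penalty term $|\nabla u_p|^{p-2}\nabla u_p$ as $\lambda\nabla u$ for a suitable nonnegative $\lambda\in L^\infty$. By Corollary \ref{C:convergences} we already have, along a subsequence, $\nabla u_p\to\nabla u$ a.e. and weakly$-*$ in $L^\infty$, and $|\nabla u_p|^{p-2}\nabla u_p\weakstar A$ in $L^\infty(\Om)^n$; by the previous theorem $\||\nabla u_p|^p\|_{\infty,\Om}\le C$, so in particular the scalar coefficients $|\nabla u_p|^{p-2}$ are bounded in $L^{p'}$ for every fixed finite exponent and, after a further extraction, $|\nabla u_p|^{p-2}\weakstar\mu$ for some nonnegative measure/function. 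First I would test \eqref{E:p-equation} against $v\in C_0^\infty(\Om)$: since $W'(|\nabla u_p|^2)\nabla u_p\to W'(|\nabla u|^2)\nabla u$ a.e.\ and is uniformly bounded (dominated convergence), and $\phi'(u_p)\to\phi'(u)$ uniformly, passing to the limit gives
\[
\int_\Om W'(|\nabla u|^2)\nabla u\cdot\nabla v + \int_\Om A\cdot\nabla v = -\int_\Om \phi'(u)v,\qquad\forall v\in C_0^\infty(\Om),
\]
which is \eqref{complementarity-1} once we know $A=\lambda\nabla u$.

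The core of the argument is the identification $A=\lambda\nabla u$ together with $\lambda\ge 0$ and the complementarity \eqref{complementarity-3}. For this I would work pointwise (a.e.) using the a.e.\ convergence of $\nabla u_p$. Write $t_p(x)=|\nabla u_p(x)|$; then $|\nabla u_p|^{p-2}\nabla u_p = t_p^{p-2}\,\nabla u_p$, and the scalar $t_p^{p-2}$ is what we want to converge to $\lambda$. On the set $\{|\nabla u|<1\}$: there $t_p\to|\nabla u|<1$ a.e., so $t_p<1-\delta$ eventually on a set of positive measure, forcing $t_p^{p-2}\to 0$; combined with the $L^\infty$ bound $t_p^p\le C$ and Egorov/dominated convergence this shows $A=0$ a.e.\ on $\{|\nabla u|<1\}$, so one may take $\lambda=0$ there, and \eqref{complementarity-3} holds trivially on that set. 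On $\{|\nabla u|=1\}$ we have $A=\lim t_p^{p-2}\nabla u_p$ and $\nabla u_p\to\nabla u$ with $|\nabla u|=1$; writing $\lambda$ for the (weak$-*$) limit of the nonnegative scalars $t_p^{p-2}$ — which exists after extraction and is nonnegative, giving \eqref{complementarity-2} — the a.e.\ convergence of $\nabla u_p$ to the unit vector $\nabla u$ and a product-of-weak$-*$-times-a.e.-convergent argument yield $A=\lambda\nabla u$ a.e.\ there. Since $|\nabla u|=1$ on this set, \eqref{complementarity-3} is automatic. Finally one checks $\lambda\in L^\infty(\Om)$: testing \eqref{complementarity-1} (already established with $A$) and using the known one-sided bound, or more directly noting $|A|=\lambda|\nabla u|=\lambda$ a.e.\ on $\{|\nabla u|=1\}$ while $A\in L^\infty$, gives $\|\lambda\|_{\infty}\le\|A\|_{\infty}<\infty$.

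The main obstacle I expect is making the passage ``$t_p^{p-2}\nabla u_p\weakstar\lambda\nabla u$'' rigorous on $\{|\nabla u|=1\}$, i.e.\ justifying that the weak$-*$ limit $A$ of the products equals the product of the weak$-*$ limit $\lambda$ of $t_p^{p-2}$ with the a.e.\ (and $L^\infty$-bounded) limit $\nabla u$ of $\nabla u_p$. This is a standard fact — weak$-*$ convergence in $L^\infty$ times strong $L^q$ (for any finite $q$) convergence passes to the limit — and since $\nabla u_p\to\nabla u$ a.e.\ and boundedly, it converges in every $L^q_{loc}$, $q<\infty$; testing against $L^1$ functions supported where $\nabla u\neq 0$ then gives the identification. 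One must also take care that the various subsequence extractions (for the a.e.\ gradient convergence, for $t_p^{p-2}$, for $|\nabla u_p|^{p-2}\nabla u_p$) are compatible, which is handled by a single diagonal extraction at the outset; and that $\lambda$ is well-defined as a function (not merely a measure), which follows from the uniform $L^\infty$ bound on $t_p^{p-2}$ restricted to $\{|\nabla u|=1\}$ inherited from $t_p^p\le C$ and $t_p\to 1$ there, so $t_p^{p-2}=t_p^p\cdot t_p^{-2}$ is bounded in $L^\infty$ on that set. With these points in place the system \eqref{complementarity-1}--\eqref{complementarity-3} follows, and assertion (1) of the theorem is exactly the identification $A=\lambda\nabla u$ just proved.
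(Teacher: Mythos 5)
Your argument is correct, but it follows a genuinely different route from the paper's. The paper never extracts a separate weak$-*$ limit of the scalars $|\nabla u_p|^{p-2}$: instead it identifies $A$ through an energy identity, testing \eqref{E:p-equation} with $u-g$ and with $u_p-g$ and passing to the limit to get $\int_\Om|\nabla u_p|^p\to\int_\Om A\cdot\nabla u$, then combining this with the lower semicontinuity estimate $\int_\Om|A|\le\liminf_{p\to\infty}\int_\Om|\nabla u_p|^{p-1}$ and with $|\nabla u|\le 1$ to obtain the single pointwise identity $|A|=A\cdot\nabla u$ a.e., from which $\lambda:=|A|\ge 0$, the factorization $A=\lambda\nabla u$, and the complementarity condition are all read off at once. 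You work pointwise from the outset: the uniform bound on $|\nabla u_p|^p$ makes $|\nabla u_p|^{p-2}$ bounded in $L^\infty$, so a weak$-*$ limit $\lambda\ge 0$ exists after one more extraction; dominated convergence forces $A=0$ (and $\lambda=0$) on $\{|\nabla u|<1\}$, which gives \eqref{complementarity-3}; and the ``weak$-*$ times strongly $L^1$-convergent'' product lemma applied to $|\nabla u_p|^{p-2}$ and $\nabla u_p$ gives $A=\lambda\nabla u$ --- note that this last step works on all of $\Om$, so your case distinction is needed only for the complementarity, not for the factorization. Your route is more elementary, leans more heavily on the a.e.\ convergence of gradients from Corollary \ref{C:convergences}, and has the merit of exhibiting $\lambda$ directly as the limit of the quantities $\lambda_p=|\nabla u_p|^{p-2}$ computed in the numerical section; the paper's energy identity avoids the extra extraction and would survive in situations where pointwise gradient convergence is unavailable. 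Two small points to tighten: on $\{|\nabla u|<1\}$ the quantity you need to dominate is $|\nabla u_p|^{p-1}\le\max(1,C)$ (not $|\nabla u_p|^p$ itself), and if you set $\lambda=0$ on $\{|\nabla u|<1\}$ by fiat you should remark that $A=\lambda\nabla u$ persists there because $A=0$ on that set (in fact no redefinition is needed, since the weak$-*$ limit of $|\nabla u_p|^{p-2}$ already vanishes there by dominated convergence).
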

\begin{proof}
The first step of the proof consists in showing that the limit field $A$ in Corollary \ref{C:convergences}~\ref{ii:convergence-lambda}
verifies
\begin{equation}\label{E:representation-of-A}
 |A|=A\cdot \nabla u\quad \text{a.e in}\ \Om.
\end{equation}
Using $u-g$ as test function in \eqref{E:p-equation} we have
\begin{equation}
 \int_\Om |\nabla u_p|^{p-2}\nabla u_p \nabla u=-\int_\Om W'(|\nabla u_p|^{2})\nabla u_p \nabla u+\phi'(u_p)(u-g)
\end{equation}
Then by Corollary \ref{C:convergences}
\begin{equation}
 \int_\Om A \nabla u=-\int_\Om W'(|\nabla u|^{2})|\nabla u|^{2}+\phi'(u)(u-g).
\end{equation}

The same procedure using $u_p-g$ as test shows that
\begin{equation}
 \int_\Om |\nabla u_p|^{p} \longrightarrow-\int_\Om W'(|\nabla u|^{2})|\nabla u|^{2}+\phi'(u)(u-g).
\end{equation}
and therefore
\begin{equation}
 \int_\Om |\nabla u_p|^{p} \longrightarrow \int_\Om A \nabla u
\end{equation}
whence 
\begin{equation}
 \int_\Om |A|\leq \int_\Om A\cdot \nabla u
\end{equation}
and \eqref{E:representation-of-A} follows using $|\nabla u|\leq 1$ a.e (Theorem \ref{T:primal-intro}).
The existence of $\lambda\in L^\infty(\Om)$ satisfying \eqref{complementarity-2} \& \eqref{complementarity-3}
follows from \eqref{E:representation-of-A}. Taking the limit in \eqref{E:p-equation} using Theorem \ref{T:primal-intro},
Corollary \ref{C:convergences} and the representation \eqref{E:representation-of-A} gives \eqref{complementarity-3}.
\end{proof}

\section{Numerical experiments}
We solved numerically the elastoplastic torsion problem in a variety of domains, that 
permitted to gain some insight on the method.
The problem 
\begin{equation}\label{esp-E:problema-general}
\min \left \{ \frac{1}{2}\int |\gr u|^2-\int hu \left | \begin{array}{c} |\gr u|\leq 1\ a.e\ in\ \Om\\ u=0\ \text{on}\
\partial\Om \end{array} \right\} \right.
\end{equation}
is approximated by the sequence of unconstrained problems
\begin{equation}\label{esp-E:problema-penalizado}
\min \left \{ \frac{1}{2}\int |\gr u_p|^2+\frac{1}{p}\int |\gr u_p|^p -\int hu_p\ \Big|\  u_p=0\ \text{on}\
\partial\Om  \right\}
\end{equation}
which possess an unique and regular solution. Besides, results of \citet{brezis-stamp} ensure that solutions $u_\infty$
of \eqref{esp-E:problema-general} are of class $C^1(\overline\Om)$ for regular domains.
For Problem \eqref{esp-E:problema-penalizado} we solve the Euler equation
\begin{equation*}
\int \pto{\gr u_p}{\gr v}+\int |\gr u_p|^{p-2}\pto{\gr u_p}{\gr v} -\int hv=0\quad \forall v\in\mathcal V,
\end{equation*}
where $\mathcal V$ stands for the space of continuous functions whose restriction to any element of a regular mesh
of $\Om$ is polynomial of degree $2$.  Since we are dealing with a nonlinear problem, we cannot apply the finite 
elements method directly; the use of an iterative procedure is necessary. However, for large $p$ 
the  convergence and stability of such an iterative procedure is a delicate issue.
\citet{ruo} proposed to use the term $|\nabla u_p|^{p-2}$ as a preconditioner 
in a gradient-type method with good results (cf. Algorithm \ref{A:primal-dual}).
Incidentally, the term used as a preconditioner by \citet{ruo} coincides with the approximating multiplier, 
and therefore, in the light of Theorem \ref{T:dual}, their algorithm can be viewed as a primal-dual algorithm 
with a multiplier computed explicitly from the primal solution, instead of maximizing a saddle-point function. 
For the tests presented here, we implemented Algorithm \ref{A:primal-dual} in C++ using the 
deal.II finite elements library \citep{dealII}. 

\begin{algorithm}[t]
\begin{tabular}{l@{\extracolsep{.5cm}}p{11cm}}
& Given $p>2$ and an initial point $u_0$, choose $c_1,\eps$. Set $n := 0$ and iterate: \\
\midrule
1.& Compute the multiplier $\lambda_n=|\nabla u_n|^{p-2}$.\\
2.& Find the primal descent direction  $w_n$, by solving
\begin{equation*}
\int_\Om (1+\lambda_n)\gr w_n \gr v dx=-\int_\Om \lambda_n \gr u_n \gr v dx + \int_\Om
fv dx \ \forall v
\end{equation*}\\
3.& Perform a line-search with sufficient decrease condition, \emph{i.e}, \\
   &find  $\alpha_n>0$ satisfying $ J(u_n+\alpha_n w_n)\leq J(u_n)+ c_1 \alpha_n J'(u_n)[w_n]$ \\
4. & Set $u_{n+1}=u_n+\alpha_n w_n$.\\
5. & If $\|J'(u_{n+1})\|\leq \eps$, stop. Otherwise update $n = n + 1$ and go to step 1.\\
\end{tabular}
\caption{The primal-dual algorithm }\label{A:primal-dual}
\end{algorithm}

Denote by {\em D} the unit disk of $\RR^2$, i.e $D=\{ x\in\RR^2 \mid x_1^2+x_2^2< 1\}$.
When $\Om=D$ and $h$ is constant,  \eqref{esp-E:problema-general} has an explicit solution \citep{GLT}. For $h\equiv4$ the solution 
is given by:
\begin{equation}\label{E:explicit-solution}
u(x)=\begin{cases}
 1-r & if\ 1/2 \leq r \leq 1\\
-r^2+3/4 & if\ 0\leq r\leq 1/2 
     \end{cases}
 \end{equation}
where $r=\sqrt{x^2+y^2}$. Since $\Om$ is convex, in this case the multiplier $\lambda$ is continuous and it is 
also given by an explicit formula \citep{brezis},
\begin{equation}\label{E:explicit-multiplier}
\lambda(x)=\begin{cases}
2 r-1  & if\ 1/2 \leq r \leq 1\\
0 & if\ 0\leq r\leq 1/2 
     \end{cases}
 \end{equation}
The norm of the gradient of the computed solution and the multiplier are plot in Figure \ref{F:circle}.  
In Table \ref{Tb:disk} we show the error with respect to the explicit solutions for different values of $p$.
It is shown that for a working precision, a parameter $p$ in the order of few hundreds is enough, preserving 
in this way the numerical stability of the algorithm.

Solving the problem in different domains gives some intuition about the extensibility of Theorem \ref{T:dual} to 
more general situations. In Figures \ref{F:rectangle} and \ref{F:Glowinski} we show the solutions of Problem 
\ref{esp-E:problema-general} in a rectangle and a domain with an interior corner, respectively. We also plot the approximate
multipliers. It is seen that in the rectangle, a convex domain with piecewise smooth border, we are still able 
to compute satisfactorily both the solution and the multiplier. On the contrary, in the piecewise smooth nonconvex
domain, even if the are able to compute the solution with a good accuracy, it is not enough to have the 
multiplier bounded. The difficulty relies on the concentration effect occuring near the interior corners. 
However, the plot with a truncated scale shows that far from the concentrations we are computing the 
right multiplier, suggesting that our method combined with some truncation mechanism \citep[see eg.][Section 4]{Li} should be able to 
cope with a more general class of problems.

\begin{table}
\caption{Error of $u_p$ and $\lambda_p$ with respect to the respective primal and dual analytical solutions of 
the limit problem given in \eqref{E:explicit-solution} and \eqref{E:explicit-multiplier}  in various norms.}
\label{Tb:disk}
\begin{center}
\begin{tabular}{ll@{\hspace{.25cm}}ll@{\hspace{.25cm}}l@{\hspace{.25cm}}l@{\hspace{.15cm}}l@{\hspace{.25cm}}l}
\toprule
    &    \multicolumn{2}{c}{Mesh info}  &\multicolumn{3}{c}{Primal error} & \multicolumn{2}{c}{Dual error}\\
$p$ & \# cells & \# dofs & $L^2$-norm & $H^1$-norm & $W_0^{1,\infty}$-norm & $L^1$-norm & $L^\infty$-norm \\ \midrule
10  & 65708 & 280049 & 4.585e-02 & 1.355e-01 & 1.756e-01 & 2.645e-01 & 2.133e-01 \\
50  & 65348 & 273345 & 9.876e-03 & 3.255e-02 & 5.680e-02 & 5.115e-02 & 6.058e-02 \\
100 & 123917 & 517501 & 4.989e-03 & 1.705e-02 & 3.366e-02 & 2.555e-02 & 3.530e-02 \\
300 & 442940 & 1883001 & 1.674e-03 & 5.956e-03 & 1.416e-02 & 8.716e-03 & 2.963e-02 \\
500 & 857396 & 3698513 & 1.006e-03 & 3.624e-03 & 9.358e-03 & 5.267e-03 & 2.705e-02 \\
 \bottomrule
\end{tabular}
\end{center}
\end{table}

\begin{figure}
\includegraphics[width=.5\linewidth]{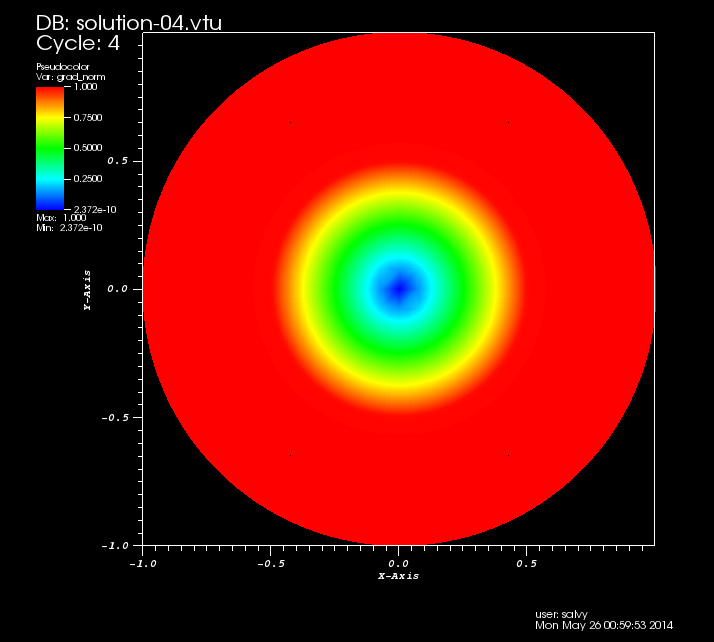}
\includegraphics[width=.5\linewidth]{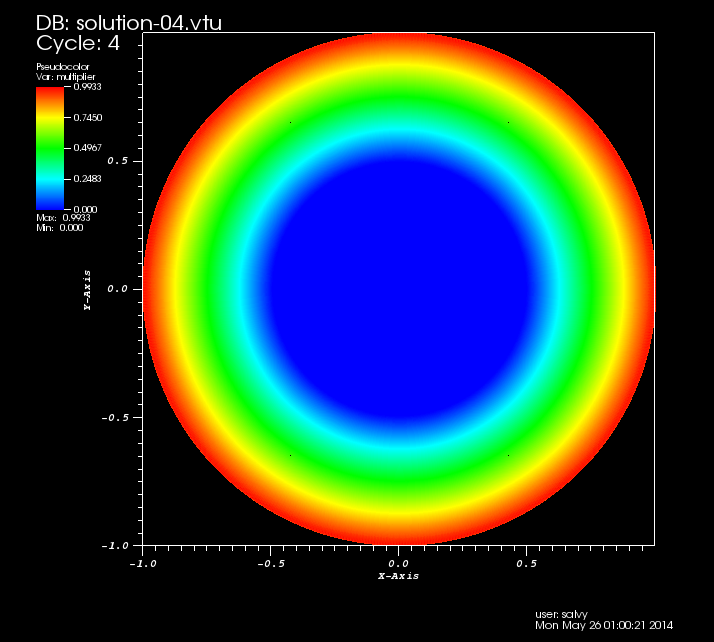}
  \caption{Plot of the norm of the gradient $|\nabla u_p|$ and the multiplier $\lambda_p=|\nabla u_p|^{p-2}$ for $p=500$
  on a circle.}
\label{F:circle}
\end{figure}
\begin{figure}
\includegraphics[width=.5\linewidth]{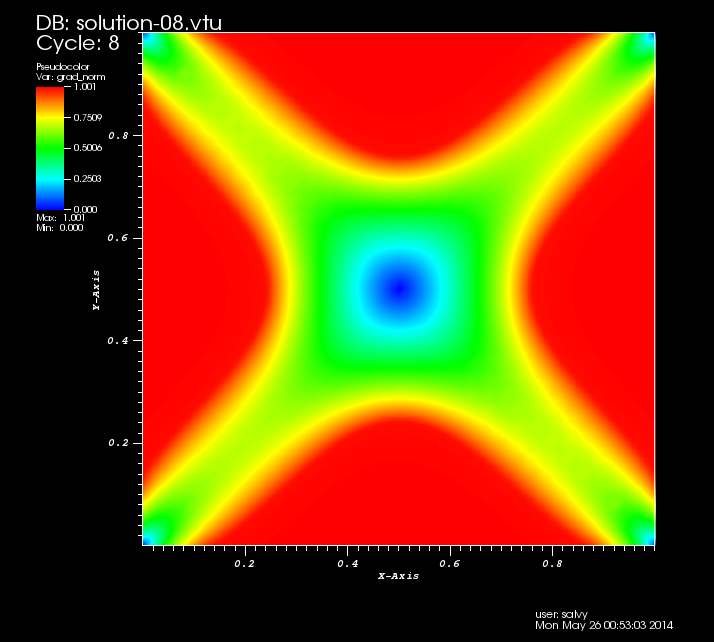}
\includegraphics[width=.5\linewidth]{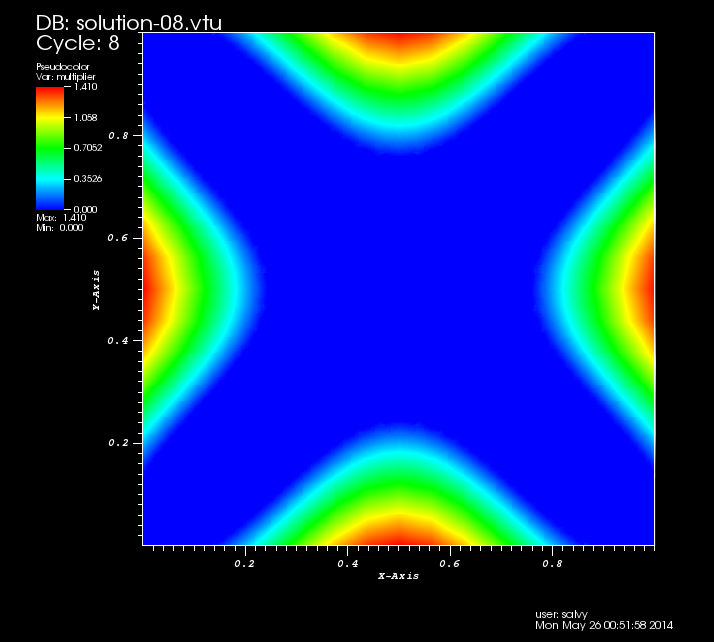}
  \caption{Plot of the norm of the gradient $|\nabla u_p|$ and the multiplier $\lambda_p=|\nabla u_p|^{p-2}$ for $p=300$
  on a rectangle.}\label{F:rectangle}
\end{figure}
\begin{figure}
\includegraphics[width=.5\linewidth]{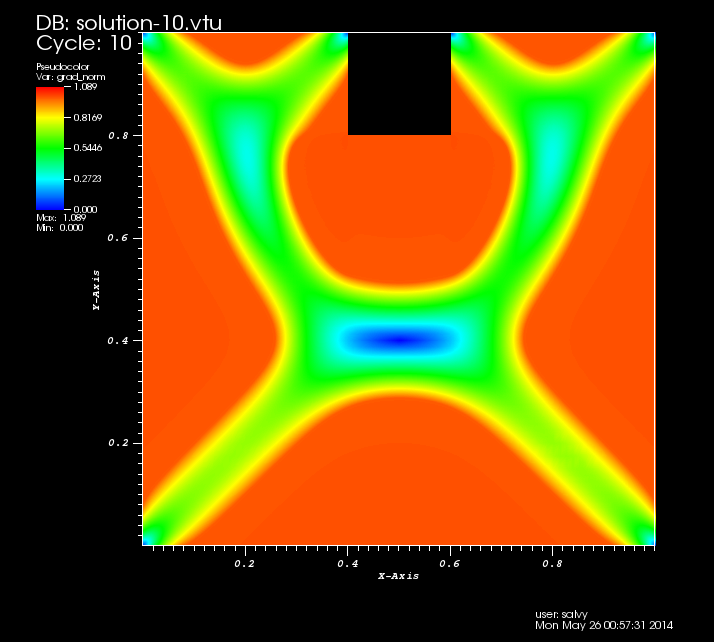}
\includegraphics[width=.5\linewidth]{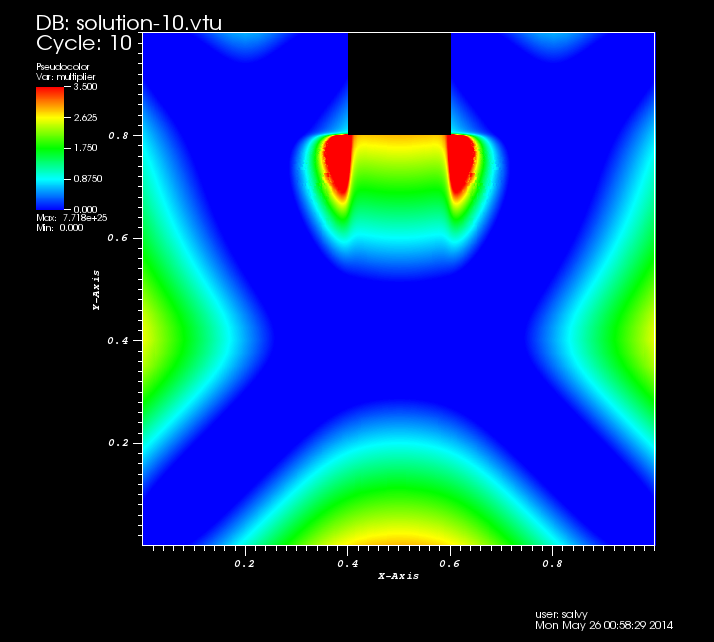}
  \caption{Plot of the norm of the gradient $|\nabla u_p|$ and the multiplier $\lambda_p=|\nabla u_p|^{p-2}$ for $p=700$ 
  on a domain with an interior corner. The scale in the plot of the multiplier is truncated.}
\label{F:Glowinski}
\end{figure}
\bibliographystyle{svjour}
\bibliography{biblio-mult}

\end{document}